\theoremstyle{plain}
\newtheorem{theorem}{Theorem}[section]
\newtheorem{lemma}[theorem]{Lemma}
\theoremstyle{definition}
\newtheorem{definition}[theorem]{Definition}
\begin{document}

\title[Markov-Nikolski] {On constrained Markov-Nikolskii type inequality for   
$k-$absolutely monotone polynomials}

\keywords{Markov inequality, Nikolskii inequality, k- monotone polynomial.}

\subjclass[2000]{ 41A17. }

\author[Klurman]{Oleksiy Klurman}

\address{D\'epartment de Math\'ematiques et de Statistique,
Universit\'e de Montr\'eal, CP 6128 succ. Centre-Ville, Montr\'eal QC H3C 3J7, Canada
Canada} \email{\texttt{lklurman@gmail.com}}

\begin{abstract}
We consider a classical problem of estimating norms of higher order derivatives of an algebraic polynomial via the norms of the polynomial itself. The corresponding extremal problem for general polynomials in the uniform norm was solved by V. A. Markov. In $1926,$ Bernstein found the exact constant in the Markov inequality for monotone polynomials. It was shown in \cite{MR2763006} that the order of the constants in constrained Markov-Nikolskii inequality for $k-$ absolutely monotone polynomials is the same as in the classical one in  case $0<p\le q\le\infty.$ In this paper, we find the exact order for all values of $0<p,q\le\infty.$ It turnes out that for the case $q<p$ constrained Markov-Nikolskii inequality is significantly better than the unconstrained one. \end{abstract}

\maketitle

\begin{section}{Introduction}
For $n\ge m\ge 0,$ we denote
$$M_{q,p}(n,m) :=\sup_{P_n\in\mathbb{P}_n}\frac{\|P^{(m)}_n\|_{L_q
[-1,1]}}{\|P_n\|_{L_p [-1,1]}}.$$

In \cite{MR2451401},
 complete information about the orders of $M_{q,p}
(n,m)$ for all values $p,q>0$ is given.
\begin{theorem}\label{glaz1}
  For $0<p,q\le\infty$ we have:
  \begin{equation}\label{eq:ivanov}
M_{q,p} (n,m)\asymp \left\{
\begin{array}{ll}
 n^{2m+2/p-2/q} , & \mbox{\rm if } m>2/q-2/p ,\\
 n^m (\log{n})^{1/q-1/p} , & \mbox{\rm if }  m=2/q-2/p, \\
 n^m, & \mbox{\rm if }  m< 2/q - 2/p .
\end{array}
\right.
\end{equation}

The asymptotic is taken when $m$ is fixed, so the constants may depend on $(m, p, q).$  
\end{theorem}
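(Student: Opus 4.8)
The plan is to prove the two-sided estimate \eqref{eq:ivanov} by first observing that its right-hand side is precisely $\max\{n^{2k+2/p-2/q},\,n^{k}\}$, with a logarithmic correction on the diagonal $k=2/q-2/p$, and that the two competing orders reflect two distinct extremal mechanisms: concentration of the polynomial in a boundary layer of width $\asymp n^{-2}$ near $\pm1$, and Chebyshev-type oscillation in the bulk. Accordingly I would prove the upper bound by one unified argument based on a Chebyshev-type partition together with local Markov--Bernstein and Nikolskii inequalities, and the matching lower bounds by exhibiting the two families of near-extremal polynomials. Throughout set $\Delta_n(x)=n^{-1}\sqrt{1-x^2}+n^{-2}$, the natural local mesh for $\mathbb{P}_n$ on $[-1,1]$.

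For the upper bound, partition $[-1,1]$ into intervals $I_j$ with $\delta_j:=|I_j|\asymp\Delta_n(\xi_j)$, $\xi_j\in I_j$. On each $I_j$ the pointwise Markov--Bernstein inequality $|P_n^{(k)}(x)|\le C\Delta_n(x)^{-k}\|P_n\|_{L_\infty(I_j)}$ and a local Nikolskii inequality $\|P_n\|_{L_\infty(I_j)}\le C\delta_j^{-1/p}\|P_n\|_{L_p(\widetilde I_j)}$ (with $\widetilde I_j$ a fixed dilate of $I_j$) combine, after $\|\cdot\|_{L_q(I_j)}\le\delta_j^{1/q}\|\cdot\|_{L_\infty(I_j)}$, into the local estimate
\[
\|P_n^{(k)}\|_{L_q(I_j)}\le C\,\delta_j^{\,1/q-k-1/p}\,\|P_n\|_{L_p(\widetilde I_j)}.
\]
Writing $a_j=\|P_n\|_{L_p(\widetilde I_j)}$ (so that $\sum_j a_j^p\le C\|P_n\|_p^p$) and $c=(k+1/p)q$, the exact identity $\|P_n^{(k)}\|_q^q=\sum_j\|P_n^{(k)}\|_{L_q(I_j)}^q$ gives $\|P_n^{(k)}\|_q^q\le C\sum_j \delta_j^{\,1-c}a_j^q$. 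When $q\ge p$ (so that $2/q-2/p\le k$ and only the order $n^{2k+2/p-2/q}$ can occur) I bound $a_j\le\|P_n\|_p$ and $\delta_j^{1-c}\le(\min_j\delta_j)^{1-c}\asymp n^{2(c-1)}$, which yields exactly $n^{2k+2/p-2/q}$. When $q<p$ I apply Hölder with exponents $p/q$ and $p/(p-q)$, isolating the geometric sum $\sum_j\delta_j^{\gamma}$ with $\gamma=(1-c)\,p/(p-q)$. Using $\sum_j\delta_j^{\gamma}\asymp\int_{-1}^{1}\Delta_n(x)^{\gamma-1}\,dx$ and the elementary boundary-layer-versus-bulk evaluation of this integral, the three cases $\gamma>-1$, $\gamma=-1$, $\gamma<-1$ — equivalent to $k<2/q-2/p$, $k=2/q-2/p$, $k>2/q-2/p$ — produce precisely $n^{k}$, $n^{k}(\log n)^{1/q-1/p}$, and $n^{2k+2/p-2/q}$. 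This settles the upper bound in every regime.

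For the lower bounds, to realize $n^{2k+2/p-2/q}$ I would take a ``needle'' $P_n$ concentrated on $[1-n^{-2},1]$ and normalized to height one there; then $\|P_n\|_p\asymp n^{-2/p}$, while its variation on scale $n^{-2}$ forces $\|P_n^{(k)}\|_q\asymp n^{2k-2/q}$, giving the quotient $n^{2k+2/p-2/q}$. To realize $n^{k}$ I would use $T_n$ itself: $\|T_n\|_p\asymp1$, whereas on the central region $|x|\le\tfrac12$ the $x$-derivatives agree with the $\theta$-derivatives and $\|T_n^{(k)}\|_{L_q[-1/2,1/2]}\asymp n^{k}$. Hence $M_{q,p}(n,k)\gtrsim\max\{n^{2k+2/p-2/q},\,n^{k}\}$. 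At the critical exponent $k=2/q-2/p$ a single needle is no longer extremal; there I would superpose needles placed at the dyadic boundary scales $2^{-i}$ with $n^{-2}\le2^{-i}\le1$, whose contributions add across the $\asymp\log n$ scales so as to generate the extra factor $(\log n)^{1/q-1/p}$.

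The main obstacle lies in matching constants rather than in the scheme. On the upper-bound side the local Nikolskii inequality must hold with a constant independent of $n$ and $j$ — that is, a degree-$n$ polynomial must behave on one mesh interval $\widetilde I_j$ like a polynomial of bounded degree — and one must control the bounded overlap of the dilates $\widetilde I_j$ in the summation; for $0<p<1$ this forces one to use these local inequalities in the form valid below $1$. On the lower-bound side the genuine work is the explicit construction of the scale-$n^{-2}$ needle with clean two-sided control of both $\|P_n\|_p$ and $\|P_n^{(k)}\|_q$, and the verification that the critical superposition adds incoherently, so that the logarithmic factor is captured with the exact exponent $1/q-1/p$.
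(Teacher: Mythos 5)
First, a point of order: the paper does not prove Theorem~\ref{glaz1} at all --- it is quoted from \cite{MR2451401} and used as a black box --- so there is no internal proof to compare yours against; I can only judge the proposal on its own terms. Your exponent bookkeeping is exactly right: the Hölder step with $\gamma=(1-c)p/(p-q)$ and the boundary-layer-versus-bulk evaluation of $\int_{-1}^{1}\Delta_n(x)^{\gamma-1}dx$ does reproduce all three regimes, including the $(\log n)^{1/q-1/p}$ on the diagonal, and the two lower-bound mechanisms (endpoint needle of width $n^{-2}$ versus $T_n$ in the bulk) are the correct ones.

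The genuine gap is in the two local inequalities on which the entire upper bound rests, and it is not a matter of ``matching constants'' as you suggest: both are \emph{false} as stated. A polynomial of degree $n$ restricted to a mesh interval $\widetilde I_j$ of length $\asymp 1/n$ is still an arbitrary polynomial of degree $n$ on that interval, so no dilate of bounded ratio makes it behave like a polynomial of bounded degree. Concretely, take $I_j$ centered at $0$ with $|I_j|\asymp 1/n$ and $P(x)=T_n(2x/|\widetilde I_j|)$: then $\|P\|_{L_\infty(\widetilde I_j)}=1$ while $\|P'\|_{L_\infty(I_j)}\asymp n^2\asymp n\cdot|I_j|^{-1}\|P\|_{L_\infty(\widetilde I_j)}$, so the localized Markov--Bernstein loses a full factor of $n$ per derivative; similarly, transplanting the Nikolskii-extremal polynomial of degree $n$ onto $\widetilde I_j$ produces a spike of width $|\widetilde I_j|/n^2$ that defeats the localized Nikolskii inequality $\|P\|_{L_\infty(I_j)}\le C\delta_j^{-1/p}\|P\|_{L_p(\widetilde I_j)}$ by a factor $n^{2/p}$. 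The localization that actually holds requires global information about $P_n$ on all of $[-1,1]$ and comes with polynomially decaying tails over the mesh (as in Mastroianni--Totik), or one bypasses the partition entirely: the clean route for $q\le p$ is the single inequality $\|P_n^{(k)}\|_{L_q}\le\|\Delta_n^{-k}\|_{L_r}\,\|\Delta_n^{k}P_n^{(k)}\|_{L_p}$ with $1/r=1/q-1/p$, where $\|\Delta_n^{k}P_n^{(k)}\|_{L_p}\le C\|P_n\|_{L_p}$ is the Ditzian--Totik weighted Bernstein--Markov inequality (itself a nontrivial theorem for $0<p<1$), and $\|\Delta_n^{-k}\|_{L_r}$ evaluates to precisely the right-hand side of \eqref{eq:ivanov}. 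Until one of these substitutes is in place, your upper bound is not proved. On the lower-bound side, the non-critical cases are fine modulo the standard needle construction, but the dyadic superposition at $k=2/q-2/p$ is only a heuristic: you must exhibit a single degree-$n$ polynomial whose restrictions to the $\asymp\log n$ dyadic annuli have the prescribed heights and local frequencies simultaneously, and verify that $\|P_n\|_{L_p}$ does not also pick up a logarithm; that verification is the actual content of the critical case.
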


For each $f\in C[-1,1]$ we denote 
\[\|f\|_{C[-1,1]}:=\|f\|. \]
By $\triangle_n$ we denote the set of all monotone polynomials of
degree $n$ on $[-1,1].$ In $1926,$ S. Bernstein \cite{MR1512353}
pointed out that Markov's inequality for monotone polynomials is not
essentially better than for all polynomials, in the sense, that the
order of $\sup_{P_n\in\triangle_ n}\|P'_n\|/\|P_n\|$ is $n^2$. He
proved his result only for odd $n$. In $2001,$ Qazi \cite{MR1835375}
extended Bernstein's idea to include polynomials of even degree.
Next theorem contains their results:

 \begin{theorem}[Bernstein \cite{MR1512353}, Qazi \cite{MR1835375}] \label{bernquaz}
\[
\sup_{P_n\in\triangle_ n}\frac{\|P'_n\|}{\|P_n\|}=\left\{
\begin{array}{ll}
 \frac{(n+1)^2}{4} , & \mbox{\rm if } n=2k+1 ,\\
 \frac{n(n+2)}{4} , & \mbox{\rm if }  n=2k.
\end{array}
\right.
\]
\end{theorem}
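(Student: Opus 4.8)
The plan is to recast the problem as a sharp estimate for nonnegative polynomials and then to evaluate a reproducing kernel at the endpoint. First I would normalize: since the ratio $\|P_n'\|/\|P_n\|$ is unchanged under $P_n\mapsto -P_n$ and under $P_n(x)\mapsto P_n(-x)$, I may assume that $P_n$ is nondecreasing and $\|P_n\|=1$. Then $Q:=P_n'$ is a nonnegative polynomial in $\mathbb{P}_{n-1}$, and because $P_n$ is monotone its sup-norm is attained at an endpoint, so
\[
\int_{-1}^{1}Q(t)\,dt=P_n(1)-P_n(-1)\le 2\|P_n\|=2,
\]
with equality exactly when $P_n(\pm1)=\pm1$. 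Hence the theorem is equivalent to the sharp bound $\|Q\|\le c_n$ over all nonnegative $Q\in\mathbb{P}_{n-1}$ with $\int_{-1}^1 Q\le 2$, where $c_n=(n+1)^2/4$ for odd $n$ and $c_n=n(n+2)/4$ for even $n$.

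Next, for each $x_0\in[-1,1]$ I would estimate $Q(x_0)/\int_{-1}^1 Q$ by a Christoffel-function (reproducing-kernel) quantity. For odd $n=2k+1$ the degree of $Q$ is $2k$, and the classical fact that $\max\{Q(x_0):Q\ge0,\ \deg Q\le 2k,\ \int Q\,dt=1\}$ equals the diagonal Legendre kernel $K_k(x_0,x_0)=\sum_{j=0}^{k}\tfrac{2j+1}{2}P_j(x_0)^2$ gives $Q(x_0)\le K_k(x_0,x_0)\int Q$. Since $|P_j(x)|\le P_j(1)=1$ on $[-1,1]$, the kernel is largest at the endpoint, $\max_{x_0}K_k(x_0,x_0)=K_k(1,1)=\tfrac12(k+1)^2$, so $\|Q\|\le(k+1)^2=(n+1)^2/4$. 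For even $n=2k$ the degree $2k-1$ is odd, so I would instead use the Luk\'acs representation $Q=(1+x)A^2+(1-x)B^2$ with $\deg A,\deg B\le k-1$, split $Q(x_0)/\int Q$ by the mediant inequality, and pass to the Jacobi kernels for the weights $(1\pm x)\,dx$. A short computation with the polynomials $P_j^{(0,1)}$ (for which $P_j^{(0,1)}(1)=1$ and norm $h_j=2/(j+1)$) gives $K^{(0,1)}_{k-1}(1,1)=k(k+1)/4$, whence $\|Q\|\le k(k+1)=n(n+2)/4$. Sharpness in both cases follows by choosing $Q$ to be the square of the (weighted) reproducing kernel evaluated at $x_0=1$; its Christoffel--Darboux closed form yields the extremal monotone polynomial after one integration.

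The main obstacle is the endpoint-maximality of the kernel in the even case. For Legendre the identity $\max_{x_0}K_k(x_0,x_0)=K_k(1,1)$ is immediate from $|P_j|\le1$, but for the weight $(1+x)$ one has $|P_j^{(0,1)}(-1)|=j+1>1$, so the unweighted kernel $K^{(0,1)}_{k-1}(x_0,x_0)$ is actually largest at $x_0=-1$; only the extra factor $(1+x_0)$ shifts the maximum back to $x_0=1$. The crux is therefore the pointwise estimate
\[
(1+x)\,[P_j^{(0,1)}(x)]^2\le 2,\qquad x\in[-1,1],
\]
for every $j$, which forces $\sup_{x_0}(1+x_0)K^{(0,1)}_{k-1}(x_0,x_0)=2K^{(0,1)}_{k-1}(1,1)$. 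This inequality is exactly what separates the sharp even-degree constant $k(k+1)$ from the cruder value $(k+1)^2$ obtained by merely viewing $Q$ as a polynomial of degree $\le 2k$, and I expect proving it (via a Jacobi-to-Legendre transformation or a Sturm/convexity analysis of $(1+x)[P_j^{(0,1)}]^2$) to be the technical heart of the argument.
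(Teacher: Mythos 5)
First, a caveat: the paper does not prove Theorem~\ref{bernquaz} at all --- it simply quotes the result of Bernstein and Qazi --- so there is no in-paper argument to compare yours against, and I can only judge the proposal on its own terms. Your reduction is sound and is essentially the classical route: after normalization, $Q=P_n'$ is a nonnegative polynomial of degree at most $n-1$ with $\int_{-1}^1 Q\le 2$, the problem becomes a Christoffel-function extremal problem, and your endpoint evaluations $K_k(1,1)=(k+1)^2/2$ and $K^{(0,1)}_{k-1}(1,1)=k(k+1)/4$ (with $h_j=2/(j+1)$) are correct, as is the recovery of the constants $(n+1)^2/4$ and $n(n+2)/4$ and the sharpness construction from the squared kernels at $x_0=1$.

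The gap is exactly where you place it, and it is a genuine one: the entire even case rests on the unproved pointwise inequality $(1+x)\,[P_j^{(0,1)}(x)]^2\le 2$, equivalently $(P_j(x)+P_{j+1}(x))^2\le 2(1+x)$ via the identity $(1+x)P_j^{(0,1)}=P_j+P_{j+1}$. Without it your argument only yields $\|Q\|\le 2K_k(1,1)=(k+1)^2=(n+2)^2/4$ for $n=2k$, strictly weaker than $n(n+2)/4$; so as written the proposal establishes the odd case and not the even one. A smaller instance of the same issue hides in your odd case: the ``classical fact'' $\sup\{Q(x_0)\}=K_k(x_0,x_0)$ at \emph{interior} $x_0$ requires, through the Luk\'acs decomposition $Q=A^2+(1-x^2)B^2$, the comparison $(1-x_0^2)K^{(1,1)}_{k-1}(x_0,x_0)\le K_k(x_0,x_0)$. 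That one can be closed termwise using $P_j^{(1,1)}=\tfrac{2}{j+2}P_{j+1}'$ together with the classical bound $(1-x^2)[P_m'(x)]^2\le m(m+1)\bigl(1-P_m(x)^2\bigr)$, which follows by differentiating $P_m^2+\tfrac{1-x^2}{m(m+1)}(P_m')^2$ and using the Legendre equation. I would look for an analogous Lyapunov function for the Jacobi equation $(1-x^2)y''+(1-3x)y'+j(j+2)y=0$ to prove $(1+x)y^2\le 2$, but note that the naive candidate $(1+x)y^2+\tfrac{(1-x^2)(1+x)}{j(j+2)}(y')^2$ has derivative $y^2+\tfrac{(3x-1)(x+1)}{j(j+2)}(y')^2$, which is only nonnegative for $x\ge 1/3$; some further idea is needed, and until that inequality is established the proof is incomplete.
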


A natural generalization of the concept of monotonicity is $k$-absolute monotonicity.
\begin{definition}
The function $f:[a,b]\to \mathbb{R}$ is {\sl absolutely monotone of
order $k$} if, for all $ x\in [a,b]$,
$$f^{(m)}(x)\ge 0,$$
for all $0\le m\le k$, and denote by $\triangle^{(k)}_n$ the set of
all absolutely monotone polynomials of order $k$ on $[-1,1].$
\end{definition}
For example, absolutely monotone functions of order zero are just
nonnegative functions on $[a,b]$, and
$\triangle_n^{(1)}=\triangle_n\cap\triangle_n^{(0)}$ is the set of
all nonnegative monotone polynomials on $[-1,1].$

A natural modification of $M_{q,p} (n,k)$ for $\triangle^{(k)}_n$ is
$$M_{q,p}^{(k)}(n,m)=\sup_{P_n\in\triangle^{(k)}_n}\frac{\|P^{(m)}_n\|_{L_q
[-1,1]}}{\|P_n\|_{L_p [-1,1]}},$$ for $0\le m\le n,$ $0\le k\le n.$

 In $2009$, A. Kro\'{o} and J. Szabados \cite{MR2564422}  found the exact constants for Markov-Nikolskii inequalities in
$L_1$ and $L_{\infty}$. Note, that  J. Szabados and A. Kro\'{o} referred
to absolutely monotone polynomials of order $k$ as ``$k$-monotone
polynomials.``

The next theorem contains theirs results:

\begin{theorem}[Kro\'{o} and Szabados \cite{MR2564422}, 2009]  For $2\le k\le n$, $m=\left
\lfloor\frac{n-k}{2}\right\rfloor +1$, $\beta
=\frac{1-(-1)^{n-k}}{2}$:

$$M_{\infty,\infty}^{(k)}
(n,1)=\frac{k-1}{1-x_{1,m}^{(k-2,\beta)}},$$
$$M_{1,1}^{(k)} (n,1)=M_{\infty,\infty}^{(k+1)}
(n+1,1),$$
 where $x_{1,m}^{(k-2,\beta)}$ is the largest zero of the Jacobi
 polynomial $J_{m}^{(k-2,\beta)},$  associated with the weight $(1-x)^{k-2}(1+x)^{\beta}.$
\end{theorem}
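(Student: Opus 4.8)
The plan is to exploit that for a $k$-absolutely monotone $P_n$ all of $P_n',\dots,P_n^{(k-1)}$ are nondecreasing, so that $\|P_n\|_\infty=P_n(1)$ and $\|P_n'\|_\infty=P_n'(1)$; the first extremal problem thus becomes the maximization of the ratio $P_n'(1)/P_n(1)$. First I would record the equivalent description of the constraint set: since each $P_n^{(j)}$ with $1\le j\le k$ is nondecreasing, membership $P_n\in\triangle_n^{(k)}$ is equivalent to $P_n^{(k)}(t)\ge 0$ for all $t\in[-1,1]$ together with the finitely many scalar conditions $P_n^{(j)}(-1)\ge 0$ for $0\le j\le k-1$. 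This isolates the genuinely infinite-dimensional constraint $P_n^{(k)}\ge 0$ from the boundary ones.

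Next I would use Taylor's formula with integral remainder, expanding $P_n'$ and $P_n$ at the left endpoint to orders $k-1$ and $k$ respectively, obtaining
\[
P_n'(1)=\sum_{j=1}^{k-1}\frac{2^{\,j-1}}{(j-1)!}\,P_n^{(j)}(-1)+\frac{1}{(k-2)!}\int_{-1}^1(1-t)^{k-2}P_n^{(k)}(t)\,dt,
\]
\[
P_n(1)=\sum_{j=0}^{k-1}\frac{2^{\,j}}{j!}\,P_n^{(j)}(-1)+\frac{1}{(k-1)!}\int_{-1}^1(1-t)^{k-1}P_n^{(k)}(t)\,dt.
\]
Both are positive linear functionals of the data $(P_n(-1),\dots,P_n^{(k-1)}(-1),P_n^{(k)})$, which ranges over the cone (nonnegative scalars)$\times$(nonnegative polynomials of degree $n-k$). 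Since the target is a ratio of two such functionals, I would invoke the mediant inequality: the supremum is bounded by, and in fact equals, the largest elementary ratio, coming either from a single boundary term $P_n^{(j)}(-1)$ or from the integral term. The boundary term of index $j$ contributes ratio $j/2\le (k-1)/2$, while $j=0$ contributes $0$; these are all dominated once the integral term is shown to exceed $(k-1)/2$, forcing the extremal polynomial to have a zero of order $k$ at $-1$.

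The heart of the matter is the integral term: maximizing $(k-1)\int(1-t)^{k-2}Q\,/\int(1-t)^{k-1}Q$ over nonnegative $Q=P_n^{(k)}$ of degree $n-k$, equivalently minimizing $\int(1-t)\,Qw\,/\int Qw=1-\int tQw\,/\int Qw$ with weight $w(t)=(1-t)^{k-2}$. Here I would bring in Gaussian quadrature with $m=\lfloor(n-k)/2\rfloor+1$ nodes, which is exact for $w$ up to degree $2m-1$. When $n-k=2m-2$ is even, both $\int Qw$ and $\int tQw$ are reproduced exactly, so $\int tQw/\int Qw$ is a convex combination of the zeros of $J_m^{(k-2,0)}$, hence at most the largest zero $x_{1,m}^{(k-2,0)}$, with equality for $Q=\prod_{i<m}(t-x_i)^2$. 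This yields $M_{\infty,\infty}^{(k)}(n,1)=(k-1)/(1-x_{1,m}^{(k-2,\beta)})$ with $\beta=0$.

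The main obstacle, and the source of the parameter $\beta$, is the odd case $n-k=2m-1$: then $tQ$ has degree $2m$ and the ordinary Gauss rule is no longer exact. Here I would switch to the Gauss--Radau rule with a fixed node at $-1$ and $m$ further nodes, whose free nodes are exactly the zeros of the polynomial orthogonal to $(1+t)w=(1-t)^{k-2}(1+t)$, i.e.\ of $J_m^{(k-2,1)}$; this rule is exact to degree $2m$, the ratio is again a convex combination of nodes, and its maximum $x_{1,m}^{(k-2,1)}$ is attained at $Q=(1+t)\prod_{i<m}(t-t_i)^2$, giving $\beta=1$. Combining both parities yields the stated formula. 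For the second identity I would run the identical computation for $M_{1,1}^{(k)}(n,1)=\sup\,(P_n(1)-P_n(-1))/\int_{-1}^1 P_n$ (using $P_n\ge 0$ and $P_n'\ge 0$), now with weight $(1-t)^{k-1}$; the boundary ratios become $(j+1)/2$ and the integral term reduces, by the same quadrature argument, to $k/(1-x_{1,m}^{(k-1,\beta)})$, which is precisely the first formula evaluated at $(n+1,k+1)$. Alternatively one may argue structurally via the lifting $P_n\mapsto R(x)=\int_{-1}^x P_n$, which carries $\triangle_n^{(k)}$ into $\triangle_{n+1}^{(k+1)}$ with $R(1)=\|P_n\|_1$ and $R'(1)=P_n(1)$: this gives the inequality $\le$ at once, and the reverse inequality follows from the order-$(k+1)$ zero at $-1$ of the extremal $R$ established in the first part.
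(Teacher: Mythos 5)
This theorem is quoted by the paper from Kroo and Szabados \cite{MR2564422} and is not proved in the paper at all, so there is no in-paper argument to compare against; I can only assess your proposal on its own, and it is a correct and essentially complete proof (indeed it follows the same quadrature route as the original source). The reduction to $P_n'(1)/P_n(1)$ is valid because $P_n,P_n',P_n''\ge 0$ forces $\|P_n\|_\infty=P_n(1)$ and $\|P_n'\|_\infty=P_n'(1)$; the Taylor expansions at $-1$ are correct; and the mediant inequality does isolate the integral term, since the boundary ratios are $j/2\le (k-1)/2$ while $(1-t)^{k-1}\le 2(1-t)^{k-2}$ makes the integral ratio always at least $(k-1)/2$. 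The identification of $\sup_Q \int tQw/\int Qw$ over nonnegative $Q$ of degree $n-k$ with the largest node of the $m$-point Gauss rule (even case, $\beta=0$) or of the Gauss--Radau rule with fixed node $-1$ (odd case, $\beta=1$) for $w(t)=(1-t)^{k-2}$ is exactly right, the exactness degrees ($2m-1$ and $2m$) cover $tQ$, and your extremal choices $\prod_{i<m}(t-x_i)^2$ and $(1+t)\prod_{i<m}(t-t_i)^2$ have the correct degrees and realize $x_{1,m}^{(k-2,\beta)}$. The $L_1$ identity via the antiderivative lifting, or equivalently by rerunning the computation with weight $(1-t)^{k-1}$, also checks out, including the bookkeeping $(m,\beta)\mapsto(m,\beta)$ under $(n,k)\mapsto(n+1,k+1)$. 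In a written-up version you should make explicit three small points: the positivity of the Christoffel numbers (needed for the convex-combination step, standard for both Gauss and Gauss--Radau), the degenerate case $P_n^{(k)}\equiv 0$, where the ratio is at most $(k-1)/2<(k-1)/(1-x_{1,m}^{(k-2,\beta)})$, and, for the lower bound in the $L_1$ statement, that the extremal $R$ of the uniform-norm problem vanishes to order $k+1$ at $-1$, which your mediant analysis already supplies.
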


 T. Erd\'{e}lyi \cite{MR2763006} found the order of  $M_{q,p}^{(k)} (n,m)$ in the case $q\ge
 p$. He was interested in how this order depends on $k$.
 
 \begin{theorem}[Erd\'{e}lyi \cite{MR2763006}, 2009]{\label{erdelyi}}
 For $0\le m\le k/2$, $1\le k\le n$,
$0<p\le
 q\le\infty$, we have \[
 M_{q,p}^{(k)} (n,m) \asymp\left( n^2/k \right)^{m+1/p-1/q}\asymp M_{q,p}(n,m)
 .\]
 First asymptotic in taken when both $n,k\to\infty,$ so the constants depend on $(p,q)$ only. Second asymptotic is taken when $k$ is fixed.
\end{theorem}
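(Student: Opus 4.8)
The plan is to establish matching upper and lower bounds of order $(n^2/k)^{m+1/p-1/q}$; only $0<p\le q\le\infty$ is relevant, and the second relation $\asymp M_{q,p}(n,m)$ is then immediate from Theorem \ref{glaz1}, since for $p\le q$ one has $2/q-2/p\le 0\le m$, so that $M_{q,p}(n,m)\asymp n^{2m+2/p-2/q}=(n^2)^{m+1/p-1/q}$, which agrees with $(n^2/k)^{m+1/p-1/q}$ for fixed $k$. Every $P\in\triangle^{(k)}_n$ is nondecreasing, so $\|P^{(j)}\|_\infty=P^{(j)}(1)$; the whole argument is organized around the single endpoint estimate
\[
P^{(m)}(1)\le C\,(n^2/k)^{m}\,P(1),\qquad P\in\triangle^{(k)}_n,\ 0\le m\le k/2,\qquad\mathrm{(A)}
\]
with $C$ independent of $n,k,m$.

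Granting (A), here is how I would derive the full upper bound. Applying (A) on a subinterval $[-1,x]$ (the affine map $[-1,1]\to[-1,x]$ preserves $\triangle^{(k)}_n$ and scales the $m$-th derivative by $((1+x)/2)^m$) gives the pointwise Bernstein-type inequality $P^{(m)}(x)\le C\big(\tfrac{2n^2}{k(1+x)}\big)^{m}P(x)$ on $(-1,1]$. Taking $m=1$ yields $P(1)-P(1-\delta)\le\delta P'(1)\le\tfrac12 P(1)$ for $\delta=k/(2Cn^2)$, i.e. the window estimate $\|P\|_p\ge\tfrac12 P(1)\,\delta^{1/p}$ with $\delta\asymp k/n^2$. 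Integrating the pointwise inequality --- using that $P^{(m)}$ is nondecreasing and that near $x=-1$ the bound is tightened by the nonnegativity of the low-order Taylor coefficients $P^{(j)}(-1)/j!$ --- produces the $L_p$ Markov inequality $\|P^{(m)}\|_p\le C'(n^2/k)^{m}\|P\|_p$. Finally, interpolating $\|P^{(m)}\|_q\le\|P^{(m)}\|_\infty^{1-p/q}\|P^{(m)}\|_p^{p/q}$, estimating $\|P^{(m)}\|_\infty=P^{(m)}(1)$ by (A), $\|P^{(m)}\|_p$ by the $L_p$ inequality, and $P(1)\le 2\delta^{-1/p}\|P\|_p$ by the window estimate, gives $\|P^{(m)}\|_q\le C''(n^2/k)^{m+1/p-1/q}\|P\|_p$, as required.

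The heart of the matter is (A). I would use the characterization that $P\in\triangle^{(k)}_n$ precisely when $P^{(k)}\ge0$ on $[-1,1]$ and $P^{(j)}(-1)\ge0$ for $0\le j<k$ (integrate $P^{(k)}\ge0$ upward, each antiderivative starting from a nonnegative value at $-1$). Writing $P$ as a nonnegative combination $\sum_{j<k}a_j(1+x)^j$ plus the $k$-fold integral of $w:=P^{(k)}\ge0$ (with $\deg w\le n-k$), the polynomial part contributes at most $k^{m}\le(n^2/k)^{m}$ to the quotient $P^{(m)}(1)/P(1)$, so by the mediant inequality (A) reduces to the weighted negative-moment estimate
\[
\frac{\int_{-1}^{1}(1-t)^{k-1-m}\,w(t)\,dt}{\int_{-1}^{1}(1-t)^{k-1}\,w(t)\,dt}\le C\Big(\frac{n^2}{k^2}\Big)^{m},\qquad w\ge0,\ \deg w\le n-k.\qquad\mathrm{(C)}
\]
The mechanism behind (C) is that the nonnegative measure $(1-t)^{k-1}w(t)\,dt$ cannot concentrate closer to $t=1$ than the largest zero of the Jacobi polynomial attached to the weight $(1-t)^{k-1}$ of degree $\asymp n-k$, and that gap has order $k^2/n^2$; controlling all negative moments up to order $m$ then amounts to a one-sided Christoffel / Markov--Stieltjes estimate for the near-endpoint mass. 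For $m=1$ this recovers, up to order of magnitude, the Kroo--Szabados constant $M^{(k)}_{\infty,\infty}(n,1)$ recorded above, whose value is governed by the largest zero of the Jacobi polynomial $J^{(k-2,\beta)}$ (weight $(1-x)^{k-2}$) of degree $\lfloor(n-k)/2\rfloor+1$, which lies within $\asymp k^2/n^2$ of the point $1$. I expect the decisive difficulty to be establishing (C) for all $m$ with a constant $C$ uniform in $m$: iterating the first-order case loses a factor growing with $m$, so a direct extremal argument tied to the Jacobi zeros seems necessary.

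For the matching lower bound I would reverse this construction. Prescribe $P^{(k)}=w$, with $w$ the extremal nonnegative polynomial for (C) --- essentially a scaled square of the relevant Jacobi polynomial, chosen so that $(1-t)^{k-1}w(t)\,dt$ concentrates at distance $\asymp k^2/n^2$ from $t=1$ --- and take the remaining data $P^{(j)}(-1)$, $0\le j<k$, to vanish. Then $P^{(m)}(1)/P(1)\asymp(n^2/k)^{m}$, while both $P$ and $P^{(m)}$ are concentrated near $x=1$ on a window of length $\asymp k/n^2$, so that $\|P\|_p\asymp P(1)(k/n^2)^{1/p}$ and $\|P^{(m)}\|_q\asymp P^{(m)}(1)(k/n^2)^{1/q}$; dividing gives the ratio $(n^2/k)^{m+1/p-1/q}$. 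The one genuinely computational point here is to check that this $P$ is concentrated enough that $\|P\|_p$ is bounded \emph{above}, not merely below, by $P(1)(k/n^2)^{1/p}$, which follows from the standard endpoint asymptotics of Jacobi polynomials.
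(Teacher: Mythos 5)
This statement is quoted background: the paper takes it verbatim from Erd\'elyi \cite{MR2763006} and contains no proof of it, so there is no internal proof to compare yours against; I can only judge the proposal on its own terms.

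Your skeleton is reasonable and the soft parts are essentially right: the affine rescaling giving the pointwise bound $P^{(m)}(x)\le C\bigl(2n^2/(k(1+x))\bigr)^mP(x)$, the window estimate $\|P\|_p\ge\tfrac12P(1)(k/n^2)^{1/p}$ from the $m=1$ case, the interpolation $\|P^{(m)}\|_q\le\|P^{(m)}\|_\infty^{1-p/q}\|P^{(m)}\|_p^{p/q}$, and the reduction of (A) to the moment inequality (C) via the decomposition of $P$ into $\sum_{j<k}a_j(1+x)^j$ plus the $k$-fold integral of $P^{(k)}$. But there is a genuine gap exactly where you flag it: (C) --- equivalently (A), which for $p=q=\infty$ \emph{is} the theorem in its hardest case --- is never proved. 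You reduce everything to the assertion that for nonnegative $w$ of degree at most $n-k$ the measure $(1-t)^{k-1}w(t)\,dt$ has all negative moments $\int(1-t)^{-m}\,d\mu$ bounded by $C(n^2/k^2)^m\mu([-1,1])$ uniformly in $m\le k/2$, gesture at Gauss--Jacobi quadrature, Christoffel functions and the location of the largest Jacobi zero, and then state that the required extremal argument ``seems necessary'' without supplying it. Since everything else is routine once (A) is granted, this is not a detail but the entire content of the result, and iterating the $m=1$ case, as you note yourself, does not give the uniformity in $m$.

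Two smaller problems. The step ``integrating the pointwise inequality \dots\ produces the $L_p$ Markov inequality'' fails as written: the weight $(1+x)^{-mp}$ is not integrable near $x=-1$ once $mp\ge1$, and ``the nonnegativity of the low-order Taylor coefficients'' is not an argument. It is fixable --- on $[0,1]$ the weight is bounded, and on $[-1,0]$ one can use monotonicity to bound $P^{(m)}(x)\le P^{(m)}(0)\le C(2n^2/k)^mP(0)$ together with $P(0)^p\le\int_0^1P^p$ --- but you should say so. Also, if the $\asymp$ is meant with constants not depending on $m$ and $k$, you must track the bases of the exponentials: factors such as $2^m$ from $(2n^2/(k(1+x)))^m$, or $k^m$ versus $k(k-1)\cdots(k-m+1)$, cannot be absorbed into $(n^2/k)^m$ with an absolute constant. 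The lower-bound construction from a squared Jacobi factor concentrating at distance $\asymp k^2/n^2$ from $1$ is the right idea, but it rests on the same unverified endpoint asymptotics as (C).
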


It follows from Theorem~\ref{erdelyi} that whenever $q\ge p$ the
order of constants in constrained Markov-Nikolskii inequality
remains the same as in the classical case. In this paper, we find
exact order for all values of $0<p,q\le\infty.$ In particular, the
results imply that the order can be significantly improved when
$q<p.$ Our main result is:

\begin{theorem}{\label{main}}
For $0<p,q\le\infty$ and $p\ne\infty,$ $0< m\le k\le n,$
\[M^{(k)}_{q,p} (n,m)\asymp
\left\{
\begin{array}{ll}
 n^{2m+2/p-2/q} , & \mbox{\rm if } m>1/q-1/p ,\\
 \log^m{n} , & \mbox{\rm if }  m=1/q-1/p, \\
 1 , & \mbox{\rm if }  m< 1/q - 1/p .
\end{array}
\right.
\]
 If $p=\infty,$ $0<q\le \infty$ and $0< m\le k\le n,$ then
\[M^{(k)}_{q,\infty} (n,m) \asymp
\left\{
\begin{array}{ll}
 n^{2m-2/q} , & \mbox{\rm if } m>\frac{1}{q} ,\\
 \log^{m-1}{n} , & \mbox{\rm if }  m=\frac{1}{q}, \\
 1 , & \mbox{\rm if }  m<\frac{1}{q}.
\end{array}
\right.
\]
The asymptotic is taken when $k$ is fixed and so the constants may depend on $(p,q,k).$
\end{theorem}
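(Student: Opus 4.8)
The plan is to separate two very different analytic regimes: the range $q\ge 1$, where a three–line interpolation argument is already sharp, and the range $0<q<1$ (together with $p=\infty$), where the improvement over the classical bound of Theorem~\ref{glaz1} comes entirely from the sign of the low–order derivatives. The two structural facts I would record first are these. If $0\le l\le k$ and $P\in\triangle^{(k)}_n$, then $R:=P^{(l)}$ is itself nonnegative and $(k-l)$-absolutely monotone, hence nondecreasing with $\|R\|_\infty=R(1)=P^{(l)}(1)$; and, crucially, nonnegativity converts the $L_1$ norm of a derivative into a telescoping endpoint difference,
$$\|P^{(l)}\|_{1}=\int_{-1}^{1}P^{(l)}=P^{(l-1)}(1)-P^{(l-1)}(-1)\le P^{(l-1)}(1).$$
This is precisely the feature that the oscillating extremizers of Theorem~\ref{glaz1} lack, and it is the source of the gain for $q<p$.

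For $q\ge 1$ I would combine the elementary inequality $\|R\|_q\le\|R\|_\infty^{1-1/q}\|R\|_1^{1/q}$ (valid because $R\ge 0$) with V.\,A.~Markov's endpoint estimate $P^{(j)}(1)\le c\,n^{2j}\|P\|_\infty$ and the Nikolskii inequality $\|P\|_\infty\le c\,n^{2/p}\|P\|_p$. A one–line exponent count then gives
$$\|P^{(l)}\|_{q}\le c\,n^{2l+2/p-2/q}\,\|P\|_{p},$$
which is exactly the first–regime bound and which in particular yields an $O(\|P\|_p)$ estimate as soon as $2l+2/p-2/q\le 0$. Since for integer $l\ge 1$ and $q\ge 1$ one is automatically in the first regime (the critical relation $l=1/q-1/p$ would force $q<1$, and the third regime $l<1/q-1/p$ is impossible), this settles the entire range $q\ge 1$ once the matching lower bounds are available.

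The lower bounds I would produce by explicit constructions. For the first regime, take a nonnegative polynomial ``needle'' $B$ of width $h\asymp n^{-2}$ at the endpoint $x=1$, set $P^{(k)}=B$, and define $P$ by integrating $k$ times from $-1$; every intermediate derivative is then nonnegative, so $P\in\triangle^{(k)}_n$, and a direct computation gives $P^{(j)}(1)\asymp h^{\,k-j}\|B\|_\infty$ for $0\le j\le k$, whence $\|P^{(l)}\|_q/\|P\|_p\asymp h^{-(l+1/p-1/q)}\asymp n^{2l+2/p-2/q}$. A fixed bounded–degree absolutely monotone polynomial supplies the trivial lower bound $\asymp 1$ in the third regime, and superposing the needle construction over the dyadic scales $h=2^{-j}$, $j\lesssim\log n$, yields the critical factor $\log^l n$.

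\textbf{The main obstacle} is the range $0<q<1$, which contains the whole critical case and in which the interpolation step above breaks down (the exponent $q-1$ is negative). Here the classical pointwise bound $(n^2/(1-x))^{l}$ merely reproduces Theorem~\ref{glaz1} and must be replaced by a \emph{monotone} endpoint estimate: absolute monotonicity forces $P^{(l)}$ to decay away from $x=1$ on the scale $1-x\asymp 1/n$ rather than $1/n^2$, with a reduced Markov factor, and establishing this sharp pointwise decay is what I expect to be the technical heart of the proof. Inserting it into a dyadic decomposition of $[-1,1]$ by distance to the endpoint and summing the $\asymp\log n$ contributions should reproduce $n^{2l+2/p-2/q}$ off the critical line and the factor $\log^l n$ on it. The case $p=\infty$ is handled by the same scheme with the Nikolskii step omitted; the loss of that single summation shifts the critical exponent to $l=1/q$ and lowers the logarithmic power to $\log^{l-1}n$, which is what the one–sided estimates in the second display of the theorem record.
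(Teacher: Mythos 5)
Your treatment of the range $q\ge 1$ is sound and in fact slightly cleaner than the paper's (the combination of $\|R\|_q\le\|R\|_\infty^{1-1/q}\|R\|_1^{1/q}$ for nonnegative $R=P^{(l)}$ with V.~A.~Markov at the endpoint and Nikolskii replaces the paper's induction in that regime), and your observation that $q\ge1$, $l\ge1$ forces the first regime is correct. But the theorem's actual content --- the improvement for $q<p$, which lives entirely in $0<q<1$ and contains both the critical line $l=1/q-1/p$ and the bounded regime --- is exactly the part you defer: you name a ``monotone endpoint estimate'' asserting that $P^{(l)}$ decays on the scale $1-x\asymp 1/n$ and say you ``expect'' it to be the technical heart, without formulating or proving it. In its natural pointwise form $P'(x)\lesssim P(x)/(1-x)$ such an estimate is false (try $P(x)=(1+x)^n$ at $x=0$), and no dyadic summation can be run until a correct substitute is in hand. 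The paper avoids pointwise bounds altogether: for $q<1$ it proves the \emph{integral} inequality $\int_{-1}^1 (P_n')^q\le \tfrac1q\int_{-1}^1 P_n^q(1-x)^{-q}\,dx$ by integration by parts (using $P_n\ge0$, $P_n'\ge0$, $P_n(-1)=0$), cuts off the interval $[1-c/n^2,1]$ with a Remez-type lemma, applies H\"older with the weight $\|(1-x)^{-1}\|_{L_r[-1,1-c/n^2]}$ --- whose three growth regimes are precisely the three cases of the theorem --- and then induces on $k$ through intermediate exponents. That chain of ideas is absent from your proposal.

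The lower bound on the critical line is likewise only gestured at. Superposing endpoint needles over dyadic scales is a reasonable heuristic, but you would still have to produce a single polynomial of degree $n$ that is $k$-absolutely monotone on all of $[-1,1]$ and for which $\|P^{(l)}\|_q\gtrsim\log^{1/q}n$ while $\|P\|_p\lesssim\log^{1/p}n$; none of these verifications is indicated. The paper instead takes (a power of) the degree-$n$ partial sum of the binomial series of $(1-x)^{-\alpha}$, whose nonnegative coefficients give absolute monotonicity for free on $[0,1]$, proves $\int_0^1 Q_n^{1/\alpha}\gtrsim\log n$ by a recursive estimate, separately bounds the contribution on $[-1,0]$, and integrates $k$ times. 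Until the $q<1$ upper bound and the critical-case construction are actually carried out, the proposal establishes only the portion of the theorem that already follows from Theorem~\ref{erdelyi} and classical inequalities.
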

\end{section}

\begin{section}{Proof of the main result}

{\bf Proof of the upper bound in Theorem \ref{main}.}
We are going to show that for $0<p,q\le\infty,$ $0< m\le k\le n$ and $p\ne\infty,$
\begin{equation}\label{uppermain}M^{(k)}_{q,p} (n,m)\le C(k,p,q)
\left\{
\begin{array}{ll}
 n^{2m+2/p-2/q} , & \mbox{\rm if } m>1/q-1/p ,\\
 \log^m{n} , & \mbox{\rm if }  m=1/q-1/p, \\
 1 , & \mbox{\rm if }  m< 1/q - 1/p .
\end{array}
\right.
\end{equation}

Consider the case $k=1.$ We distinguish between two cases.

{\bf Case 1.} $q\ge 1.$ Clearly, $\frac{1}{q}-\frac{1}{p}< 1.$
Without loss of generality, we can assume that $P_n(-1)=0.$ Note, that for each
$P_n\in\triangle_n^{(1)}$ we have $\|P'_n\|_{L_1[-1,1]}=\|P_n\|.$ By
Nikolskii inequality $$\|P_n'\|_{L_q [-1,1]}\le
C_1(q)n^{2-\frac{2}{q}}\|P'_n\|_{L_1 [-1,1]},$$ and

$$ \|P_n'\|_{L_1[-1,1]} \le \|P_n\|\le C_1(p)n^{\frac{2}{p}}\|P_n\|_{L_p [-1,1]},
$$
so
$$ \|P_n'\|_{L_q[-1,1]} \le C_2(q,p)n^{2-\frac{2}{q}+\frac{2}{p}}\|P_n\|_{L_p
[-1,1]}.
$$

{\bf Case 2.} Let $q<1.$ We first prove, that for all
$P_n\in\triangle_n^1,$ $P_n(-1)=0$ the following inequality holds:
$$\int_{-1}^1 P'^q_n(x)dx\le \frac{1}{q}\int_{-1}^1\frac{P_n^q(x)}{(1-x)^q}dx.$$
Indeed, integration by parts yields
\[S=\int_{-1}^1\frac{P_n^q(x)}{(1-x)^q}dx=\frac{1}{q-1}P_n^q(x)(1-x)^{1-q}|^{1}_{-1}+\frac{q}{1-q}\int_{-1}^1P_n'(x)P_n^{q-1}(x)(1-x)^{1-q}dx.\]
 Since $P_n(-1)=0,$ we have
$$S_1=\frac{1-q}{q}S=\int_{-1}^1P_n'(x)P_n^{q-1}(x)(1-x)^{1-q}dx.$$
We now estimate $S_1+S$ to get the result:
\[S_1+S=\frac{1}{q}S=\int_{-1}^1\left[\frac{P_n^q}{(1-x)^q}+P_n'(x)P_n^{q-1}(x)(1-x)^{1-q}\right]dx\ge \int_{-1}^1\left[P_n'(x) \right]^qdx\]
since \[\frac{P_n^q(x)}{(1-x)^q}+P_n'(x)P_n^{q-1}(x)(1-x)^{1-q}\ge
\left[P_n'(x) \right]^q\] pointwise. Indeed, if \[\frac{P_n^q(x)}{(1-x)^q}\ge
\left[P_n'(x) \right]^q\] the inequality clearly holds. In the other case, if
\[\frac{P_n^q(x)}{(1-x)^q}< \left[P_n'(x) \right]^q,\] then
\[\left[P_n'(x) \right]^{q-1}<(1-x)^{1-q}P_n^{q-1}(x)\] and the second term dominates the right-hand side.

Next we show that it is possible to stay bounded away from the
endpoints of the interval in the sense, that
\[\int_{-1}^1\left[P_n'(x) \right]^qdx\le C_3(q)\int_{-1}^{1-c/n^2}\frac{P_n^q(x)}{(1-x)^q}dx.\]

To prove the last inequality, we estimate
\begin{align*}
\int_{1-c/n^2}^1\frac{P_n^q(x)}{(1-x)^q}dx\le
P^q_n(1)\int_{1-c/n^2}^1(1-x)^{-q}dx
&=\frac{1}{1-q}c^{1-q}n^{2q-2}\|P'_n\|^q_{L_1[-1,1]}\\&\le
c^{1-q}C(q)\|P'_n\|^q_{L_q[-1,1],}
\end{align*}
where the constant $C_1(q)$ comes from the classical Nikoskii
inequality for polynomial $P'_n$ and spaces $L_1[-1,1]$ and
$L_q[-1,1]$ respectively. Taking $c$ to be sufficiently small,
 we can make $c^{1-q}C(q)\le \frac{q}{2}.$ For such defined $c=c(p,q)$ we have 
\begin{equation}{\label{bound1}}
\int_{-1}^1\left[P_n'(x) \right]^qdx\le\frac{2}{q}\int^{1-c/n^2}_{-1}\frac{P_n^q(x)}{(1-x)^q}dx.
\end{equation}

We are ready to prove bounds from above for $k=1.$ For $q\ge p$ the
result follows from the classical Markov-Nikolskii inequality. Let
$\frac{1}{q}=\frac{1}{p}+\frac{1}{r}$ and $r>0.$
 Combining~\eqref{bound1} with Young's inequality we get
\begin{align*}\frac{\|P'_n\|_{L_q [-1,1]}}{\|P_n\|_{L_p [-1,1]}}&\le
\frac{2}{q}\frac{\|P_n(x)(1-x)^{-1}\|_{L_q[-1,1-c/n^2]}}{\|P_n\|_{L_p[-1,1-c/n^2]}}\\&\le\frac{2}{q}\|(1-x)^{-1}\|_{L_r[-1,1-c/n^2].}
\end{align*}
The only thing left is to observe that
\[\|(1-x)^{-1}\|_{L_r[-1,1-c/n^2]}\asymp
\left\{
\begin{array}{ll}
 n^{2+2/p-2/q} , & \mbox{\rm if } 1>1/q-1/p ,\\
 \log{n} , & \mbox{\rm if }  1=1/q-1/p, \\
 1 , & \mbox{\rm if }  1< 1/q - 1/p .
\end{array}
\right.
\]
We prove the upper bound of the theorem for all $k$ by induction. The
base case has been proved above. Let us assume that for each
$P_n\in\triangle_n^{k-1},$ $k\ge 2,$ $1\le m\le k-1$ we have
\[\frac{\|P^{(m)}_n\|_{L_q [-1,1]}}{\|P_n\|_{L_p [-1,1]}}\le C(k-1,q,p)
\left\{
\begin{array}{ll}
 n^{2(m-1)+2/p-2/q} , & \mbox{\rm if } m-1>1/q-1/p ,\\
 \log^{m-1}{n} , & \mbox{\rm if }  m-1=1/q-1/p, \\
 1 , & \mbox{\rm if }  m-1< 1/q - 1/p .
\end{array}
\right.
\]
Take $P_n\in\triangle_n^k.$ If $\frac{1}{q}-\frac{1}{p}=m,$ then
$\frac{1}{q}-\frac{1}{p/p+1}=m-1.$ Using induction hypothesis for
$Q_n=P_n'\in\triangle_n^{k-1},$ we get
\begin{align*}
\frac{\|P^{(m)}_n\|_{L_q [-1,1]}}{\|P_n\|_{L_p [-1,1]}}&\le
C(k-1,q,p/p+1)\log^{m-1} n\frac{\|P'_n\|_{L_{p/p+1}
[-1,1]}}{\|P_n\|_{L_p [-1,1]}}\\&\le C(k,q,p)\log^m n.
\end{align*}
Following the same lines, if $\frac{1}{q}-\frac{1}{p}>m,$ take
$r<\frac{p}{p+1}$ such that $\frac{1}{q}-\frac{1}{r}>m-1$ and use
induction hypothesis to arrive at
\begin{align*}
\frac{\|P^{(m)}_n\|_{L_q [-1,1]}}{\|P_n\|_{L_p [-1,1]}}&\le
C(k-1,q,r)\frac{\|P'_n\|_{L_{r} [-1,1]}}{\|P_n\|_{L_p [-1,1]}}\\&\le
C(k,q,p).
\end{align*}
If $\frac{1}{q}-\frac{1}{p}<m,$ take $r>\frac{p}{p+1}$ such that
$\frac{1}{q}-\frac{1}{r}<m-1$ and use induction hypothesis to get
\begin{align*}
\frac{\|P^{(m)}_n\|_{L_q [-1,1]}}{\|P_n\|_{L_p [-1,1]}}&\le
C(k-1,q,r)n^{2(m-1)+2/p-2/q}\frac{\|P'_n\|_{L_{r}
[-1,1]}}{\|P_n\|_{L_p [-1,1]}}\\&\le C(k,q,p)n^{2m+2/p-2/q}.
\end{align*}
The proof of an upper bound is now complete.

We treat the case $p=\infty$ separately.
\begin{lemma}
\[M^{(k)}_{q,\infty} (n,m)\le C(k,q)
\left\{
\begin{array}{ll}
 n^{2m-2/q} , & \mbox{\rm if } m>\frac{1}{q} ,\\
 \log^{m-1}{n} , & \mbox{\rm if }  m=\frac{1}{q}, \\
 1 , & \mbox{\rm if }  m<\frac{1}{q}.
\end{array}
\right.
\]

\end{lemma}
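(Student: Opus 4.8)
The plan is to derive the endpoint case $p=\infty$ from the finite-$p$ bound of Theorem~\ref{uppermain} at $p=1$, by spending a single differentiation. The mechanism is that for an absolutely monotone polynomial the $L_\infty$-norm sits at the right endpoint and is dominated by the $L_1$-norm of its derivative, so passing from $P_n$ to $P_n'$ turns the sup-norm in the denominator into an $L_1$-norm, where Theorem~\ref{uppermain} already applies.

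First I would fix $P_n\in\triangle_n^{(k)}$ with $1\le l\le k$; the degenerate case $l=0$ is the elementary bound $\|P_n\|_{L_q[-1,1]}\le 2^{1/q}\|P_n\|_\infty$. Since $k\ge 1$, both $P_n$ and $P_n'$ are nonnegative and $P_n$ is nondecreasing, whence $\|P_n\|_\infty=P_n(1)$ and
\[\|P_n'\|_{L_1[-1,1]}=\int_{-1}^1 P_n'(x)\,dx=P_n(1)-P_n(-1)\le\|P_n\|_\infty.\]
Setting $Q_n:=P_n'\in\triangle_n^{(k-1)}$ and using $P_n^{(l)}=Q_n^{(l-1)}$, I would factor
\[\frac{\|P_n^{(l)}\|_{L_q[-1,1]}}{\|P_n\|_\infty}=\frac{\|Q_n^{(l-1)}\|_{L_q[-1,1]}}{\|Q_n\|_{L_1[-1,1]}}\cdot\frac{\|Q_n\|_{L_1[-1,1]}}{\|P_n\|_\infty}\le M^{(l-1)}_{q,1}(n,k-1),\]
so that $M^{(l)}_{q,\infty}(n,k)\le M^{(l-1)}_{q,1}(n,k-1)$.

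The remaining step is to apply Theorem~\ref{uppermain} with $p=1$ and derivative order $l-1$ to the class $\triangle_n^{(k-1)}$ (the hypotheses $0\le l-1\le k-1\le n$ and $p=1\ne\infty$ hold), and to read off the three regimes. The borderline case $\tfrac1q-\tfrac1p=l-1$ of that theorem becomes $\tfrac1q=l$, i.e. $ql=1$; the supercritical case yields $n^{2(l-1)+2-2/q}=n^{2l-2/q}$, the critical case the factor $\log^{l-1}n$, and the subcritical case a constant, which matches the asserted bound.

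The step I expect to require the most care is the bookkeeping of the thresholds: one must verify that lowering the derivative order by one while fixing $p=1$ shifts the critical exponent in Theorem~\ref{uppermain} to exactly the line separating the three cases, and in particular that the borderline here produces $\log^{l-1}n$ and not $\log^{l}n$. It is worth stressing why the reduction is lossless: the crude alternative $\|P_n'\|_\infty\le Cn^2\|P_n\|_\infty$ would insert a spurious factor $n^2$ and ruin the small-$q$ regimes, whereas replacing the sup-norm by the $L_1$-norm of the derivative costs only the constant $1$, and this is precisely what keeps the endpoint estimate sharp.
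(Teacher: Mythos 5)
Your proof is correct and is precisely the paper's argument: the paper's entire proof of this lemma is the observation $\|P_n\|_\infty\ge\|P_n'\|_{L_1[-1,1]}$ followed by an appeal to Theorem~\ref{uppermain}, which is exactly your reduction $M^{(l)}_{q,\infty}(n,k)\le M^{(l-1)}_{q,1}(n,k-1)$. One caveat: the thresholds you correctly derive are $ql\gtrless 1$ (equivalently $l\gtrless 1/q$), not the $q\gtrless l$ printed in the statement, so your remark that this ``matches the asserted bound'' in fact exposes a typo in the lemma's case distinctions rather than confirming their literal form.
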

\begin{proof}
Since $\|P_n\|\ge \|P_n'\|_{L_1[-1,1]}$ the result immediately
follows from ~\eqref{uppermain}.
\end{proof}
To prove the lower bounds we begin with the following two lemmas:
\begin{lemma}{\label{example1}}
Consider
\[Q_n(x)=\sum_{k=1}^n\frac{\alpha(\alpha+1)...(\alpha+k-1)}{k!}x^k\]
for $\alpha=\frac{1}{m}$ and integer $m\ge 1.$ Then
\[\int_{0}^1 Q_n^{\frac{1}{\alpha}}(x)dx\ge C(\alpha)\log n.\]
\end{lemma}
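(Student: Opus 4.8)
The plan is first to identify the polynomial. Reading the summand as running over $k$ from $0$ to $n$, the coefficient $\frac{\alpha(\alpha+1)\cdots(\alpha+k-1)}{k!}=\binom{\alpha+k-1}{k}$ is precisely the $k$-th Taylor coefficient of $(1-x)^{-\alpha}$, so $Q_n$ is the degree-$n$ truncation of the binomial series for $(1-x)^{-\alpha}$. All its coefficients are nonnegative, and heuristically $Q_n(x)\approx(1-x)^{-\alpha}$ once $x$ is bounded away from $1$, whence $Q_n^{1/\alpha}(x)\approx(1-x)^{-1}$, an integrand whose divergence at the endpoint produces the logarithm. The whole argument reduces to making this heuristic quantitative on the interval $[1/2,\,1-1/n]$, over which the needed mass is captured before the truncation error becomes relevant.

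The key step is a pointwise lower bound $Q_n(x)\ge C_1(\alpha)(1-x)^{-\alpha}$ valid for $x\in[1/2,\,1-1/n]$. I would obtain it by discarding all but the first $N(x):=\lfloor 1/(1-x)\rfloor$ terms; since $x\le 1-1/n$ forces $N(x)\le 1/(1-x)\le n$, this truncated sum is genuinely a part of $Q_n$. For $k\le N(x)$ one has $k(1-x)\le 1$, and using the elementary inequality $\log(1-t)\ge -2t$ on $[0,1/2]$ (valid since $t=1-x\le 1/2$) gives $x^k=(1-(1-x))^k\ge e^{-2kt}\ge e^{-2}$ uniformly. Combining this with the partial-sum identity $\sum_{k=0}^{N}\binom{\alpha+k-1}{k}=\binom{\alpha+N}{N}$ (read off as the coefficient of $x^N$ in $(1-x)^{-\alpha}(1-x)^{-1}=(1-x)^{-(\alpha+1)}$) and the bound $\binom{\alpha+N}{N}\ge c(\alpha)N^{\alpha}$ for all $N\ge 1$, I arrive at
\[
Q_n(x)\ge e^{-2}\sum_{k=0}^{N(x)}\binom{\alpha+k-1}{k}\ge e^{-2}c(\alpha)\,N(x)^{\alpha}\ge C_1(\alpha)(1-x)^{-\alpha},
\]
where the last inequality uses $N(x)\ge\tfrac{1}{2(1-x)}$ on $[1/2,1-1/n]$.

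Raising to the power $1/\alpha$ and integrating over the subinterval then closes the argument:
\[
\int_{0}^{1}Q_n^{1/\alpha}(x)\,dx\ge C_1(\alpha)^{1/\alpha}\int_{1/2}^{1-1/n}\frac{dx}{1-x}=C_1(\alpha)^{1/\alpha}\bigl(\log n-\log 2\bigr)\ge C(\alpha)\log n
\]
for $n$ large, absorbing the additive constant into $C(\alpha)$. I expect the main obstacle to be the pointwise lower bound itself: one must choose the truncation index $N(x)$ large enough that $\binom{\alpha+N(x)}{N(x)}\asymp(1-x)^{-\alpha}$ already captures the correct order of magnitude, yet small enough that the weight $x^k$ has not begun to decay—that is, one must verify that the mass of the partial sum genuinely concentrates on the range $k\lesssim 1/(1-x)$. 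The two quantitative inputs that make this balance work are the uniform estimate $x^k\ge e^{-2}$ for $k\le 1/(1-x)$ and the bound $\binom{\alpha+N}{N}\asymp N^{\alpha}$ holding for \emph{all} $N\ge 1$ rather than merely asymptotically; once these are in place the remaining manipulations are routine.
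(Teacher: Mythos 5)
Your proof is correct, but it takes a genuinely different route from the paper's. You establish the pointwise comparison $Q_n(x)\ge C_1(\alpha)(1-x)^{-\alpha}$ on $[1/2,\,1-1/n]$ --- by truncating at $N(x)=\lfloor 1/(1-x)\rfloor$ (which satisfies $N(x)\le n$ on that range), using $x^k\ge e^{-2}$ for $k\le N(x)$ via $\log(1-t)\ge-2t$ on $[0,1/2]$, and the closed form $\sum_{k=0}^{N}\binom{\alpha+k-1}{k}=\binom{\alpha+N}{N}\ge c(\alpha)N^{\alpha}$ --- and then integrate $(1-x)^{-1}$ to harvest the logarithm; all of these steps check out, and the argument treats every $\alpha>0$ at once. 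The paper instead telescopes in $n$: it applies the Mean Value Theorem to $Q_{n+1}^{1/\alpha}-Q_n^{1/\alpha}$ and splits into three cases, $\alpha=1$ (direct integration of the geometric sum), $\alpha>1$ (where the increment is bounded below by $C(\alpha)x^{n+1}$, so the integrals grow like a harmonic series), and $\alpha<1$ (a multiplicative recursion $I_{n+1}\ge(1+C(\alpha)/n)I_n$ over the moving window $[1-1/n,1]$). Your approach buys uniformity in $\alpha$ and a stronger, reusable pointwise statement: the matching upper bound $Q_n(x)\le(1-x)^{-\alpha}$ is immediate from positivity of the coefficients, and that two-sided control is exactly what the paper invokes later in the proof of Theorem~\ref{lowermain}. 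The paper's induction is quicker to state for $\alpha>1$, but its $\alpha<1$ case is more delicate precisely because the integration interval changes with $n$, a complication your argument avoids entirely. One cosmetic remark: the displayed sum in the lemma is indexed by $i$ but written in $k$ and starts at $1$; you read $Q_n$ as the Taylor section of $(1-x)^{-\alpha}$ from $k=0$, which is consistent with how the paper uses $Q_{n,\alpha}$ elsewhere, and including or omitting the constant term is immaterial for the lower bound.
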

\begin{proof}
For $\alpha=1$ the result immediately follows from direct
integration. For $\alpha\ne 1,$ we first note that
\[\frac{\alpha(\alpha+1)...(\alpha+k-1)}{k!}\sim k^{\alpha-1}.\] Introducing 
\[B_n(x)=1+\sum_{k=1}^nx^kk^{\alpha-1},\]
we are left to show that
\[\int_{0}^1 B_n^m(x)dx\ge C(\alpha)\log n.\]
    Using generalized binomial theorem the coefficient of $x^l,$ $1\le l\le n,$ is equal to 
\[\sum_{l_1+l_2+...+l_m=l}(l_1l_2\cdot...\cdot l_m)^{\alpha-1}.\]
Therefore, since the number of ways to represent $l$ as a sum of $m$ positive integers is equal to ${m+l-1\choose m-1}\sim l^{m-1}$ and each term 
\[(l_1l_2\cdot...\cdot l_m)^{\alpha-1}\ge \left(\frac{l}{m}\right)^{(\alpha-1)m}\sim l^{1-m},\]
 
we get 
\[\sum_{l_1+l_2+...+l_m=l}(l_1l_2\cdot...\cdot l_m)^{\alpha-1}\ge C(\alpha).\]

Therefore
\[\int_{0}^1 B_n^m(x)dx\ge C(\alpha)\int_{0}^1 \sum_{k=0}^n x^kdx  \ge C(\alpha)\log n.\]

\end{proof}
\begin{lemma}
Let $n\in \mathbb{N}$ and
\[Q_{n.\alpha}(x)=\sum_{k=1}^n\frac{\alpha(\alpha+1)...(\alpha+k-1)}{k!}x^k,\]
and $\alpha<1.$ Then
\[\left|Q_{n,\alpha}(x)\right|\le C_1(\alpha),\]
for all $x\in [-1,0].$
\end{lemma}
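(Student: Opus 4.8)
The plan is to recognize $Q_{n,\alpha}$ as a truncation of the binomial series of $(1-x)^{-\alpha}$ and to exploit the resulting alternating structure on $[-1,0]$. Writing $c_k:=\frac{\alpha(\alpha+1)\cdots(\alpha+k-1)}{k!}$, these are exactly the Maclaurin coefficients of $(1-x)^{-\alpha}$, so $Q_{n,\alpha}(x)=\sum_{k=1}^n c_k x^k$ is the $n$-th partial sum of $(1-x)^{-\alpha}$ with its constant term $1$ removed. Since we only care about $x\in[-1,0]$, I would substitute $x=-t$ with $t\in[0,1]$ and study
\[
Q_{n,\alpha}(-t)=\sum_{k=1}^n(-1)^k c_k\, t^k,
\]
which is a partial sum of the convergent alternating series representing $(1+t)^{-\alpha}$.

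The key observation is a monotonicity of the coefficients forced precisely by the hypothesis $\alpha\le 1$. Assuming $0<\alpha\le 1$ (so that all $c_k$ are positive, as in the previous lemma), one has
\[
\frac{c_{k+1}}{c_k}=\frac{\alpha+k}{k+1}\le 1,
\]
hence $(c_k)_k$ is non-increasing. Consequently, for every fixed $t\in[0,1]$ the terms $a_k:=c_k t^k$ form a non-negative, non-increasing sequence, being a product of two non-negative non-increasing sequences. I would then invoke the standard Leibniz bound for alternating series: whenever $(a_k)$ is non-negative and non-increasing, all partial sums of $\sum_k(-1)^k a_k$ lie between the first two partial sums, and in particular
\[
\Bigl|\sum_{k=1}^n(-1)^k a_k\Bigr|\le a_1=c_1 t=\alpha t.
\]

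Putting these together gives $|Q_{n,\alpha}(x)|=|Q_{n,\alpha}(-t)|\le \alpha t\le\alpha\le 1$ for all $x\in[-1,0]$, so the claim holds with $C_1(\alpha)=\alpha$ (or simply $1$), uniformly in $n$. There is no genuinely hard step here; the only points requiring care are verifying that $a_k=c_k t^k$ is simultaneously monotone for every $t\in[0,1]$, which is exactly where the constraint $\alpha\le 1$ is used, and checking that the bound persists at the endpoint $t=1$ (equivalently $x=-1$), where the series still converges because $\alpha>0$. I note in passing that the parity assumption $n=2l$ plays no role in this estimate; it is inherited from the construction of the extremal polynomials and is harmless here.
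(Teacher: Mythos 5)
Your proof is correct (for $0<\alpha\le 1$, which is the only case the paper actually uses, since there $\alpha=\frac{1}{2mq}>0$), but it takes a genuinely different route. The paper treats $Q_{n,\alpha}$ as the Taylor partial sum of $(1-x)^{-\alpha}$ and controls the error via the integral form of the remainder, $\frac{\alpha(\alpha+1)\cdots(\alpha+n)}{(n+1)!}\int_0^x(1-y)^{-\alpha-n-1}(x-y)^n\,dy$; the evenness of $n$ fixes the sign of this remainder on $[-1,0]$, and its magnitude is bounded by $C_1(\alpha)n^{\alpha-1}\le C_1(\alpha)$, which is where $\alpha\le 1$ enters. You replace all of this by the Leibniz bound for alternating partial sums, with $\alpha\le 1$ entering instead through the coefficient monotonicity $c_{k+1}/c_k=(\alpha+k)/(k+1)\le 1$. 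Your route buys an explicit constant $C_1(\alpha)=\alpha\le 1$, avoids any asymptotics of the coefficients, and --- as you correctly observe --- makes no use of the parity of $n$, so the hypothesis $n=2l$ is superfluous for this bound (the paper needs it only to pin down the sign of the Taylor remainder). The one thing the paper's argument additionally yields is a one-sided estimate; note, however, that the displayed Taylor identity in the paper omits the constant term $1$ of the binomial series, and your computation (all partial sums lie in $[-a_1,0]$) shows that $Q_{n,\alpha}\le 0$ on $[-1,0]$, so the paper's claim $0<Q_{n,\alpha}(x)$ should properly read $0<1+Q_{n,\alpha}(x)$. This positivity is required neither by the statement of the lemma nor by its application, where only boundedness of $Q_n^{2m}$ on $[-1,0]$ is used.
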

\begin{proof}
Using Abel's summation formula, we arrive at
\begin{align*}
Q_{n.\alpha}(x)&=\sum_{k=1}^{n-1}\left(\frac{\alpha(\alpha+1)...(\alpha+k-1)}{k!}-\frac{\alpha(\alpha+1)...(\alpha+k)}{(k+1)!}\right)\left(\sum_{i=1}^kx^i\right)\\&+\frac{\alpha(\alpha+1)...(\alpha+n-1)}{n!}\sum_{i=1}^nx^i 
\end{align*}

Observe, that each sum of the form $\sum_{i=1}^kx^i=\frac{1-x^{k+1}}{1-x}\le 2$ for $x\in [-1,0]$ and 
\[\frac{\alpha(\alpha+1)...(\alpha+k-1)}{k!}-\frac{\alpha(\alpha+1)...(\alpha+k)}{(k+1)!}\sim k^{\alpha-2}.\]
The only thin left is to observe that the sum 
\[\sum_{k=1}^{\infty}k^{\alpha-2},\]
converges for $\alpha<1.$ 

\end{proof}

{\bf Proof of the lower bound in Theorem~\ref{main}.} We show that for $0<p,q\le\infty,$ $p\ne\infty$ and $0< m\le k\le n$
\begin{equation}\label{lowermain}M^{(k)}_{q,p} (n,m)\ge C(k,p,q)
\left\{
\begin{array}{ll}
 n^{2m+2/p-2/q} , & \mbox{\rm if } m>1/q-1/p ,\\
 \log^m{n} , & \mbox{\rm if }  m=1/q-1/p, \\
 1 , & \mbox{\rm if }  m< 1/q - 1/p .
\end{array}
\right.
\end{equation}

Note , that in the case when our order is $n^{2m+2/p-2/q}$ the
polynomial was constructed by Erd\'{e}lyi, more precisely, the
construction in Theorem $2.1$ in \cite{MR2763006}  is valid for all
$0<p,q\le\infty.$ So of interest is to construct a polynomial
$P_n\in\triangle_n^k$ such that for all $0<m\le k$
\[M^{(k)}_{q,p}(n,m)\asymp \log^m n.\] By continuity, we can assume that $p,q\in\mathbb{Q}.$ 
Take
\[Q_n(x)=\sum_{k=1}^n\frac{\frac{1}{2mq}(\frac{1}{2mq}+1)...(\frac{1}{2mq}+k-1)}{k!}x^k\]
and consider
\[P_{\nu}(y)=P_{2mn+k}(y)=\int_{-1}^yQ_n^{2m}(x)(y-x)^{k-1}dx.\]
Clearly, $\nu=2mn+k$ and $\deg P_{\nu}=2mn+k.$ It is easy to see,
that $P_{\nu}\in\triangle_{\nu}^k$ and
$P_{\nu}^{(k)}(x)=Q_n^{2m}(x).$ Lemma~\ref{example1} implies
$\|P_{\nu}^{(k)}\|_{L_q[-1,1]}\ge C(k,q)\log^{1/q} n.$ Thus, we are
left to prove that $\|P_{\nu}\|_{L_p[-1,1]}\le C(k,p)\log^{1/p} n.$
Since $P_{\nu}\in\triangle_n^k$ Remez inequality (see \cite{MR1367960}) implies that for
sufficiently small $c=c(p)$
\[\|P_{\nu}\|^p_{L_p[-1,1]}\le 2\int_0^1P^p_{\nu}(x)dx\le C(p)\int_0^{1-c/n^2}P^p_{\nu}(x)dx.\]
Now, for $y>0$ using that $|a+b|^p\le C(p)(|a|^p+|b|^p),$ $y-x\le
1-x$ and $0<Q_n(x)\le (1-x)^{-1/2mq}$ for $x\ge 0$ we can estimate
\begin{align*}
\int_0^{1-c/n^2}P^p_{\nu}(x)dx&=\int_0^{1-c/n^2}\left(\int_{-1}^yQ_n^{2m}(x)(y-x)^{k-1}dx\right)^pdy\\&
\le
C(p)\int_0^{1-c/n^2}\left(\int_{0}^yQ_n^{2m}(x)(y-x)^{k-1}dx\right)^pdy\\&+
C(p)\int_0^{1-c/n^2}\left(\int_{-1}^0Q_n^{2m}(x)(y-x)^{k-1}dx\right)^pdy\\&\le
C(p)\int_0^{1-c/n^2}\left(\int_{0}^yQ_n^{2m}(x)(y-x)^{k-1}dx\right)^pdy\\&+2^kC(p)\left(\int_{-1}^0Q_n^{2m}(x)(y-x)^{k-1}dx\right)^p\\&
\le
\int_0^{1-c/n^2}\left(\int_{-1}^y(1-x)^{-1/q}(1-x)^{k-1}dx\right)^pdy+C_1(p,k,m)\\&\le
C_2(p)\log n.
\end{align*}
It is now straightforward to get a sharp result for all intermediate
derivatives of $k-$ absolutely monotone polynomials by using
\eqref{uppermain} and \eqref{lowermain}.

 The result for the case $p=\infty$ immediately follows from the
 construction described above and the fact $\|P_n\|=\|P_n'\|_{L_1[-1,1]}.$
\end{section}
\\

{\bf Acknowelegment}
\\

The author would like to thank Professor K. Kopotun for all his support and encouragement and to anonymous referee for useful suggestions.\bibliographystyle{plain}

\bibliography{mybliography}

\end{document}